\documentclass[final,3p]{elsarticle}
\usepackage{lineno,hyperref}
\usepackage{amsmath}
\usepackage{amssymb}
\usepackage[linesnumbered,ruled,vlined]{algorithm2e}
\usepackage{caption}
\usepackage{mathtools}
\usepackage{changes}
\usepackage{multirow}
\usepackage{verbatim}
\usepackage{mathrsfs}
\usepackage{graphicx}
\usepackage{subcaption}
\usepackage{amsmath,amsthm,bm,mathrsfs}
\theoremstyle{plain}
\newtheorem{theorem}{Theorem}[section]

\newtheorem{proposition}[theorem]{Proposition}

\theoremstyle{definition}

\theoremstyle{remark}
\newtheorem{remark}{Remark}

\modulolinenumbers[5]

\journal{ArXiv.org}

\begin{document}

\begin{frontmatter}

\title{Numerical Solution of Large-Scale Low-Rank Coupled Generalized Sylvester Equations}

\author[uz]{Umair~Zulfiqar\corref{mycorrespondingauthor}}
\cortext[mycorrespondingauthor]{Corresponding author}
\ead{umair@yangtzeu.edu.cn}
\address[uz]{School of Electronic Information and Electrical Engineering, Yangtze University, Jingzhou, Hubei, 434023, China}

\begin{abstract}
This paper develops projection methods for computing low-rank solutions to large-scale coupled generalized Sylvester equations. The dominant subspaces of these solutions are shown to be spanned by specific resolvent directions, motivating a Petrov--Galerkin projection framework. Conditions for the unique solvability of the projected coupled generalized Sylvester equations are also discussed. Based on these dominant subspaces, two algorithms are proposed: an extended Krylov-based projection algorithm and a rational Krylov-based projection algorithm. The former uses fixed shifts, whereas the latter uses adaptive shifts. Numerical experiments on large-scale benchmark problems demonstrate that the proposed methods achieve high accuracy and fast convergence.
\end{abstract}

\begin{keyword}
Coupled generalized Sylvester equations\sep Krylov subspace\sep Low-rank\sep Projection\sep Rational interpolation
\end{keyword}

\end{frontmatter}

\section{Introduction}
Coupled matrix equations arise in various areas of applied mathematics and control theory. An important motivation comes from descriptor systems, where converting differential-algebraic equations (DAEs) into equivalent ordinary differential equation (ODE) state-space representations requires solving coupled generalized Sylvester equations at the algebraic level \cite{gerdin2007parameter,shahzad2011efficient}. This transformation is essential for controller design and numerical simulation of DAE-based models.

Consider the coupled generalized Sylvester equations
\begin{align}
A_1 X + Y A_2  &= - B_1 C_1, \nonumber\\
E_1 X + Y E_2  &= - B_2 C_2, \label{eq:coupled}
\end{align}
where $X, Y \in \mathbb{R}^{n_1 \times n_2}$,
$A_1, E_1 \in \mathbb{R}^{n_1 \times n_1}$,
$A_2, E_2 \in \mathbb{R}^{n_2 \times n_2}$,
$B_1, B_2 \in \mathbb{R}^{n_1 \times m}$, and
$C_1, C_2 \in \mathbb{R}^{m \times n_2}$, with
$m \ll \min\{n_1, n_2\}$.

The following assumptions hold throughout the paper:
\begin{enumerate}
    \item[(A1)] The matrices $E_1$ and $E_2$ are nonsingular.
    \item[(A2)] The spectra of $E_1^{-1}A_1$ and $E_2^{-1}A_2$ are disjoint:
    \[
    \Lambda(E_1^{-1}A_1) \cap \Lambda(E_2^{-1}A_2) = \emptyset.
    \]
    This condition ensures that the coupled operator is bijective, ensuring that \eqref{eq:coupled} has a unique solution for any right-hand side \cite{wimmer1994consistency,kagstrom1989generalized,kaagstrom2000extracting,jones2009modeling}.
    \item[(A3)] The solutions $X$ and $Y$ are numerically low-rank. Because the right-hand sides possess a low-rank structure, the singular values of $X$ and $Y$ decay rapidly, allowing accurate low-rank approximations.
\end{enumerate}
\section{Main Work}
This section first presents a general projection framework without prescribing the projection matrices and discusses conditions for the unique solvability of the projected coupled generalized Sylvester equations. An analysis of the structural properties of low-rank solutions then guides the choice of projection matrices, leading to two projection algorithms for computing low-rank solutions to large-scale coupled generalized Sylvester equations.
\subsection{Petrov--Galerkin Projection Framework}
Projection methods are widely used to compute low-rank solutions to matrix equations \cite{simoncini2016computational}. For the coupled generalized Sylvester equations \eqref{eq:coupled}, consider approximations of the form
\begin{equation}\label{eq:ansatz}
\tilde{X} = \hat{V}_x \hat{X} \hat{W}_x^\top, \qquad
\tilde{Y} = \hat{V}_y \hat{Y} \hat{W}_y^\top,
\end{equation}
where $\hat{V}_x, \hat{V}_y \in \mathbb{R}^{n_1 \times r}$ and $\hat{W}_x, \hat{W}_y \in \mathbb{R}^{n_2 \times r}$ have orthonormal columns, and $\hat{X}, \hat{Y} \in \mathbb{R}^{r \times r}$. With the residuals
\begin{align*}
R_1(\tilde{X}, \tilde{Y}) &= A_1 \tilde{X} + \tilde{Y} A_2 + B_1 C_1, \\
R_2(\tilde{X}, \tilde{Y}) &= E_1 \tilde{X} + \tilde{Y} E_2 + B_2 C_2,
\end{align*}
the Petrov--Galerkin conditions
\begin{equation}\label{eq:galerkin}
\hat{V}_y^\top R_1(\tilde{X}, \tilde{Y}) \hat{W}_x = 0, \qquad
\hat{V}_y^\top R_2(\tilde{X}, \tilde{Y}) \hat{W}_x = 0
\end{equation}
yield the projected coupled generalized Sylvester equations
\begin{align}
\hat{A}_1 \hat{X} + \hat{Y} \hat{A}_2  &= - \hat{B}_1 \hat{C}_1, \nonumber\\
\hat{E}_1 \hat{X} + \hat{Y} \hat{E}_2  &= - \hat{B}_2 \hat{C}_2, \label{eq:projected}
\end{align}
with
\begin{align*}
\hat{A}_1 &= \hat{V}_y^\top A_1 \hat{V}_x, \quad
\hat{E}_1 = \hat{V}_y^\top E_1 \hat{V}_x, \quad
\hat{B}_1 = \hat{V}_y^\top B_1, \quad
\hat{B}_2 = \hat{V}_y^\top B_2, \\
\hat{A}_2 &= \hat{W}_y^\top A_2 \hat{W}_x, \quad
\hat{E}_2 = \hat{W}_y^\top E_2 \hat{W}_x, \quad
\hat{C}_1 = C_1 \hat{W}_x, \quad
\hat{C}_2 = C_2 \hat{W}_x.
\end{align*}
Under the standing assumptions, a specific choice of the test bases $\hat{V}_y$ and $\hat{W}_x$ ensures the well-posedness of \eqref{eq:projected}.

\begin{theorem}\label{thm:projected}
Let $\hat{V}_x \in \mathbb{R}^{n_1 \times r}$ and $\hat{W}_y \in \mathbb{R}^{n_2 \times r}$ be arbitrary matrices with orthonormal columns, and define
\begin{equation}\label{eq:test_bases}
\hat{V}_y = \operatorname{orth}(E_1 \hat{V}_x), \qquad
\hat{W}_x = \operatorname{orth}(E_2^\top \hat{W}_y).
\end{equation}
Assume that $\hat{E}_1 = \hat{V}_y^\top E_1 \hat{V}_x$ and $\hat{E}_2 = \hat{W}_y^\top E_2 \hat{W}_x$ are nonsingular. If
\begin{equation}\label{eq:fov_condition}
\mathcal{F}(A_1 E_1^{-1}) \cap \mathcal{F}(E_2^{-1} A_2) = \emptyset,
\end{equation}
then the projected coupled generalized Sylvester equations \eqref{eq:projected} have a unique solution $(\hat{X}, \hat{Y}) \in \mathbb{R}^{r \times r} \times \mathbb{R}^{r \times r}$ for every right-hand side.
\end{theorem}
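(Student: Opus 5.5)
The plan is to reduce the projected coupled system \eqref{eq:projected} to a single standard Sylvester equation, and then to show that the two coefficient matrices of that equation are orthogonal compressions of $A_1E_1^{-1}$ and $E_2^{-1}A_2$.

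\emph{Reduction.} Since $\hat{E}_1$ and $\hat{E}_2$ are nonsingular, the second equation of \eqref{eq:projected} gives
\[
\hat{X} = -\hat{E}_1^{-1}\bigl(\hat{Y}\hat{E}_2 + \hat{B}_2\hat{C}_2\bigr).
\]
Substituting this into the first equation and multiplying on the right by $\hat{E}_2^{-1}$ yields
\[
\hat{Y}\,(\hat{A}_2\hat{E}_2^{-1}) - (\hat{A}_1\hat{E}_1^{-1})\,\hat{Y} = \hat{F},
\]
where $\hat{F}$ depends only on the data. Conversely, any solution $\hat{Y}$ of this equation, with $\hat{X}$ defined by the formula above, solves \eqref{eq:projected}. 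Hence unique solvability of \eqref{eq:projected} for every right-hand side is equivalent to unique solvability of this Sylvester equation. By the classical criterion, this holds iff $\Lambda(\hat{A}_1\hat{E}_1^{-1}) \cap \Lambda(\hat{A}_2\hat{E}_2^{-1}) = \emptyset$. Since $\hat{A}_2\hat{E}_2^{-1}$ is similar to $\hat{E}_2^{-1}\hat{A}_2$, it suffices to locate the spectra of $\hat{A}_1\hat{E}_1^{-1}$ and $\hat{E}_2^{-1}\hat{A}_2$.

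\emph{Compression structure.} This step uses the choice \eqref{eq:test_bases} and is the crux of the argument.

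For the first matrix, $\hat{V}_y=\operatorname{orth}(E_1\hat{V}_x)$ with $r$ columns, so $E_1\hat{V}_x=\hat{V}_yR$ for some $R\in\mathbb{R}^{r\times r}$. Then $\hat{E}_1=\hat{V}_y^\top E_1\hat{V}_x=R$, and $R$ is nonsingular by hypothesis. Writing $A_1\hat{V}_x = (A_1E_1^{-1})E_1\hat{V}_x$ gives
\[
\hat{A}_1=\hat{V}_y^\top (A_1E_1^{-1})\hat{V}_y R,
\qquad\text{so}\qquad
\hat{A}_1\hat{E}_1^{-1}=\hat{V}_y^\top (A_1E_1^{-1})\hat{V}_y .
\]

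For the second matrix, $E_2^\top\hat{W}_y=\hat{W}_xS$ with $S$ square. Then $\hat{E}_2=\hat{W}_y^\top E_2\hat{W}_x=S^\top$, which is nonsingular. Writing $\hat{W}_y^\top A_2 = \hat{W}_y^\top E_2 (E_2^{-1}A_2) = S^\top\hat{W}_x^\top(E_2^{-1}A_2)$ gives
\[
\hat{A}_2=S^\top\hat{W}_x^\top(E_2^{-1}A_2)\hat{W}_x,
\qquad\text{so}\qquad
\hat{E}_2^{-1}\hat{A}_2=\hat{W}_x^\top(E_2^{-1}A_2)\hat{W}_x .
\]

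\emph{Field-of-values containment.} For any $M$ and any $Q$ with orthonormal columns, every eigenvalue $\lambda$ of $Q^\top MQ$, with unit eigenvector $z\in\mathbb{C}^r$, satisfies
\[
\lambda = z^*Q^\top MQz = w^*Mw, \qquad w=Qz,\ \|w\|=1,
\]
so $\lambda\in\mathcal{F}(M)$. Applying this to the two compressions gives
\[
\Lambda(\hat{A}_1\hat{E}_1^{-1})\subseteq\mathcal{F}(A_1E_1^{-1}),
\qquad
\Lambda(\hat{E}_2^{-1}\hat{A}_2)\subseteq\mathcal{F}(E_2^{-1}A_2).
\]
By \eqref{eq:fov_condition} these two spectra are disjoint, which completes the argument.

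The main obstacle is the second step: one must recognise that the specific test bases turn the projected pencils into genuine orthogonal compressions, and the derivation needs $\operatorname{orth}$ to return exactly $r$ columns. That is guaranteed because nonsingularity of $\hat{E}_1=R$ and $\hat{E}_2=S^\top$ forces $E_1\hat{V}_x$ and $E_2^\top\hat{W}_y$ to have full column rank $r$. The field of values must also be taken over complex unit vectors, so that complex eigenvalues of the real compressions are covered.
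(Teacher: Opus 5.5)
Your proposal is correct and follows essentially the same route as the paper. Both arguments reduce to a single Sylvester equation, use $E_1\hat V_x=\hat V_y\hat E_1$ and $E_2^\top\hat W_y=\hat W_x\hat E_2^\top$ to identify the reduced coefficient matrices (up to similarity) with the orthogonal compressions $\hat V_y^\top A_1E_1^{-1}\hat V_y$ and $\hat W_x^\top E_2^{-1}A_2\hat W_x$, and conclude by field-of-values containment. The differences are cosmetic: you eliminate $\hat X$ from the inhomogeneous system where the paper eliminates $\hat Y$ from the homogeneous one, you use exact identities where the paper factors the pencils $\hat A_1-\lambda\hat E_1$ and $\hat A_2-\lambda\hat E_2$, and you prove $\Lambda(Q^\top MQ)\subseteq\mathcal{F}(M)$ directly rather than citing it.
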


\begin{proof}
Eliminating $\hat{Y}$ from the homogeneous projected system gives
\[
M_1 \hat{X} = \hat{X} M_2, \qquad
M_1 = \hat{E}_1^{-1} \hat{A}_1, \quad
M_2 = \hat{E}_2^{-1} \hat{A}_2.
\]
Since the projected operator is square, unique solvability for every right-hand side is equivalent to
$\Lambda(M_1) \cap \Lambda(M_2) = \emptyset$.

The definitions in \eqref{eq:test_bases} give
\[
E_1 \hat{V}_x = \hat{V}_y \hat{E}_1, \qquad
E_2^\top \hat{W}_y = \hat{W}_x \hat{E}_2^\top.
\]
Consequently,
\begin{align*}
\hat{A}_1 - \lambda \hat{E}_1
&= \hat{V}_y^\top (A_1 E_1^{-1} - \lambda I)
   E_1 \hat{V}_x
 = (K_1 - \lambda I)\hat{E}_1, \\
\hat{A}_2 - \lambda \hat{E}_2
&= \hat{W}_y^\top E_2 (E_2^{-1} A_2 - \lambda I)
   \hat{W}_x
 = \hat{E}_2(L_2 - \lambda I),
\end{align*}
where
\[
K_1 = \hat{V}_y^\top A_1 E_1^{-1} \hat{V}_y, \qquad
L_2 = \hat{W}_x^\top E_2^{-1} A_2 \hat{W}_x.
\]
The nonsingularity of $\hat{E}_1$ and $\hat{E}_2$ therefore implies
\[
\Lambda(M_1) = \Lambda(K_1), \qquad
\Lambda(M_2) = \Lambda(L_2).
\]
Field-of-values containment for orthogonal compressions \cite{simoncini2016computational} gives
\[
\Lambda(K_1) \subseteq \mathcal{F}(A_1 E_1^{-1}), \qquad
\Lambda(L_2) \subseteq \mathcal{F}(E_2^{-1} A_2).
\]
Thus, \eqref{eq:fov_condition} ensures that
$\Lambda(M_1) \cap \Lambda(M_2) = \emptyset$, completing the proof.
\end{proof}

\begin{remark}
Condition \eqref{eq:fov_condition} is sufficient but not necessary for unique solvability of \eqref{eq:projected}. The necessary and sufficient condition is
$\Lambda(M_1) \cap \Lambda(M_2) = \emptyset$, or, equivalently, coprimeness of the polynomials
$\det(\hat{A}_1 - s\hat{E}_1)$ and
$\det(\hat{A}_2 - s\hat{E}_2)$
\cite{wimmer1994consistency}.
\end{remark}
\subsection{Choice of Projection Matrices}

The choice of projection subspaces is guided by the structural properties of the exact solution pair.

\begin{proposition}\label{prop:svd_structure}
Let \(X,Y\in\mathbb{R}^{n_1\times n_2}\) uniquely solve the coupled Sylvester equations \eqref{eq:coupled}. Assume that \(X\) and \(Y\) admit exact thin singular value decompositions of rank \(r\):
\begin{equation}\label{eq:svd}
X = V_x\Sigma_xW_x^{\top},
\qquad
Y = V_y\Sigma_yW_y^{\top},
\end{equation}
where \(V_x,V_y\in\mathbb{R}^{n_1\times r}\) and \(W_x,W_y\in\mathbb{R}^{n_2\times r}\) have orthonormal columns, and \(\Sigma_x,\Sigma_y\in\mathbb{R}^{r\times r}\) are nonsingular diagonal matrices. Define the projected matrices with respect to these exact factors:
\begin{align*}
\bar{A}_1 &= V_y^{\top}A_1V_x, \quad \bar{E}_1 = V_y^{\top}E_1V_x,\quad \bar{B}_1 = V_y^{\top}B_1,\quad \bar{B}_2 = V_y^{\top}B_2,\\
\bar{A}_2 &= W_y^{\top}A_2W_x,\quad \bar{E}_2 = W_y^{\top}E_2W_x,\quad \bar{C}_1 = C_1W_x,\quad \bar{C}_2 = C_2W_x.
\end{align*}
Assume that \(\bar{E}_1, \bar{E}_2, \bar{A}_1,\) and \(\bar{A}_2\) are nonsingular. Further assume that \(V_{11}, V_{12}, W_{21},\) and \(W_{22}\) uniquely solve the following Sylvester equations:
\begin{align}
A_1V_{11} - E_1V_{11}\big(\bar{A}_2\bar{E}_2^{-1}\big) + B_1\big(\bar{C}_1\bar{E}_2^{-1}\big) &= 0, \label{eq:V11}\\
A_1V_{12} - E_1V_{12}\big(\bar{A}_2\bar{E}_2^{-1}\big) - B_2\big(\bar{C}_2\bar{E}_2^{-1}\big) &= 0, \label{eq:V12}\\
A_2^{\top}W_{21} - E_2^\top W_{21}\big(\bar{E}_1^{-1}\bar{A}_1\big)^\top + C_1^{\top}\big(\bar{E}_1^{-1}\bar{B}_1\big)^\top &= 0, \label{eq:W21}\\
A_2^{\top}W_{22} - E_2^\top W_{22}\big(\bar{E}_1^{-1}\bar{A}_1\big)^\top - C_2^{\top}\big(\bar{E}_1^{-1}\bar{B}_2\big)^{\top} &= 0. \label{eq:W22}
\end{align}
Then the SVD factors satisfy the following relations:
\begin{align}
V_x &= (V_{11}\bar{E}_2+V_{12}\bar{A}_2)\Sigma_x^{-1}, \label{eq:Vx}\\
W_x &= -\left(E_2^{\top}W_{21} + A_2^{\top}W_{22}\right)\Sigma_x^{-1}, \label{eq:Wx}\\
V_y &= -\left(E_1V_{11} + A_1V_{12}\right)\Sigma_y^{-1}, \label{eq:Vy}\\
W_y &= (W_{21}\bar{E}_1^{\top}+W_{22}\bar{A}_1^{\top})\Sigma_y^{-1}. \label{eq:Wy}
\end{align}
\end{proposition}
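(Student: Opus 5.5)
The plan is to turn the coupled equations \eqref{eq:coupled} into small Sylvester-type equations for the scaled factors $Z:=V_x\Sigma_x$ and $U:=W_y\Sigma_y$. We then show that the combinations on the right-hand sides of \eqref{eq:Vx} and \eqref{eq:Wy} solve the same equations, and conclude by uniqueness. The other two factors, \eqref{eq:Wx} and \eqref{eq:Vy}, come out of the elimination step. The only structural facts used are
\[
XW_x = V_x\Sigma_x,\qquad YW_y=V_y\Sigma_y,\qquad X^\top V_x = W_x\Sigma_x,\qquad Y^\top V_y=W_y\Sigma_y,
\]
together with $Y = YW_yW_y^\top$ and $X^\top = X^\top V_xV_x^\top$, all of which follow from \eqref{eq:svd}.

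\textbf{Step 1 (column factors).} Multiply both equations of \eqref{eq:coupled} on the right by $W_x$. Using $YA_2W_x = V_y\Sigma_y W_y^\top A_2 W_x = V_y\Sigma_y\bar A_2$, and likewise for $E_2$, we obtain
\[
A_1Z + V_y\Sigma_y\bar A_2 = -B_1\bar C_1,\qquad E_1Z + V_y\Sigma_y\bar E_2 = -B_2\bar C_2 .
\]
Since $\bar E_2$ is nonsingular, the second relation gives $V_y\Sigma_y = -(E_1Z+B_2\bar C_2)\bar E_2^{-1}$. Substituting into the first yields
\[
A_1Z - E_1Z\bar E_2^{-1}\bar A_2 = -B_1\bar C_1 + B_2\bar C_2\bar E_2^{-1}\bar A_2 .
\]

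Now set $\tilde Z = V_{11}\bar E_2 + V_{12}\bar A_2$. We have the factorization
\[
A_1\tilde Z - E_1\tilde Z\bar E_2^{-1}\bar A_2 = \big(A_1V_{11}-E_1V_{11}\bar A_2\bar E_2^{-1}\big)\bar E_2 + \big(A_1V_{12}-E_1V_{12}\bar A_2\bar E_2^{-1}\big)\bar A_2 .
\]
By \eqref{eq:V11} and \eqref{eq:V12}, the two brackets equal $-B_1\bar C_1\bar E_2^{-1}$ and $B_2\bar C_2\bar E_2^{-1}$, respectively. So $\tilde Z$ satisfies the same equation as $Z$.

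\textbf{Step 2 (the other column factor).} Back-substitution gives
\[
V_y\Sigma_y = -\big(E_1V_{11} + (E_1V_{12}\bar A_2 + B_2\bar C_2)\bar E_2^{-1}\big).
\]
Equation \eqref{eq:V12} identifies the bracketed term $(E_1V_{12}\bar A_2 + B_2\bar C_2)\bar E_2^{-1}$ with $A_1V_{12}$. This gives \eqref{eq:Vy}.

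\textbf{Step 3 (row factors).} The row factors are handled symmetrically. Transpose \eqref{eq:coupled} and multiply on the right by $V_y$. Using $X^\top A_1^\top V_y = W_x\Sigma_x\bar A_1^\top$, we get
\[
W_x\Sigma_x\bar A_1^\top + A_2^\top U = -C_1^\top\bar B_1^\top,\qquad W_x\Sigma_x\bar E_1^\top + E_2^\top U = -C_2^\top \bar B_2^\top .
\]
Eliminating $W_x\Sigma_x$ via $\bar E_1^{-\top}$ gives
\[
A_2^\top U - E_2^\top U\bar E_1^{-\top}\bar A_1^\top = -C_1^\top\bar B_1^\top + C_2^\top\bar B_2^\top\bar E_1^{-\top}\bar A_1^\top .
\]
Repeating Step 1 with \eqref{eq:W21}–\eqref{eq:W22} shows that $W_{21}\bar E_1^\top + W_{22}\bar A_1^\top$ solves this equation. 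Back-substituting as in Step 2 gives \eqref{eq:Wx}.

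\textbf{Main obstacle: uniqueness.} The delicate point is justifying that the equations for $Z$ and $U$ have unique solutions, so that $Z=\tilde Z$, and similarly for $U$. The hypothesis only gives unique solvability of \eqref{eq:V11}, i.e.\ injectivity of $\mathcal L(V)=A_1V-E_1V\bar A_2\bar E_2^{-1}$.

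I plan to transfer this via the substitution $Z = Z'\bar E_2$, which gives
\[
A_1Z - E_1Z\bar E_2^{-1}\bar A_2 = \mathcal L(Z')\,\bar E_2 .
\]
Hence the $Z$-operator is injective, because $\mathcal L$ is and $\bar E_2$ is nonsingular. Being a square linear map, it is therefore bijective.

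The same argument with $\bar E_1^\top$ handles $U$, using unique solvability of \eqref{eq:W21}. Dividing by the nonsingular $\Sigma_x$ and $\Sigma_y$ then yields \eqref{eq:Vx}–\eqref{eq:Wy}. The nonsingularity of $\bar A_1$ and $\bar A_2$ does not appear to be needed in this argument.
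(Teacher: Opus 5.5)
Your proposal is correct and takes essentially the same route as the paper. You project with $W_x$ (resp.\ $V_y$), eliminate $V_y\Sigma_y$ (resp.\ $W_x\Sigma_x$), check that the combination built from $V_{11},V_{12}$ (resp.\ $W_{21},W_{22}$) solves the same Sylvester equation, conclude by the injectivity inherited from unique solvability of \eqref{eq:V11} (resp.\ \eqref{eq:W21}), and back-substitute using \eqref{eq:V12} (resp.\ \eqref{eq:W22}). The only difference is cosmetic: you keep the unknown $V_x\Sigma_x$ and transfer injectivity via $Z=Z'\bar E_2$, whereas the paper first scales by $\bar E_2^{-1}$ so the operator is literally that of \eqref{eq:V11}. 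Your remark that the nonsingularity of $\bar A_1,\bar A_2$ is never used applies equally to the paper's proof.
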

\begin{proof}
The proof is provided in the appendix.
\end{proof}

Proposition~\ref{prop:svd_structure} immediately yields the following subspace inclusions:
\begin{align}
\operatorname{Ran}(V_x) &\subseteq \operatorname{Ran}\left(\begin{bmatrix} V_{11} & V_{12} \end{bmatrix}\right), \label{eq:Vx_enlarged}\\
\operatorname{Ran}(V_y) &\subseteq \operatorname{Ran}\left(\begin{bmatrix} E_1V_{11} & A_1V_{12} \end{bmatrix}\right), \label{eq:Vy_enlarged}\\
\operatorname{Ran}(W_y) &\subseteq \operatorname{Ran}\left(\begin{bmatrix} W_{21} & W_{22} \end{bmatrix}\right), \label{eq:Wy_enlarged}\\
\operatorname{Ran}(W_x) &\subseteq \operatorname{Ran}\left(\begin{bmatrix} E_2^{\top}W_{21} & A_2^{\top}W_{22} \end{bmatrix}\right). \label{eq:Wx_enlarged}
\end{align}
These follow directly from the factorizations \eqref{eq:Vx}--\eqref{eq:Wy}. For instance, \eqref{eq:Vx} implies \(V_x\Sigma_x = \begin{bmatrix} V_{11} & V_{12} \end{bmatrix} \begin{bmatrix} \bar{E}_2 \\ \bar{A}_2 \end{bmatrix}\), which establishes \eqref{eq:Vx_enlarged}. The enlarged bases thus contain the ranges of the exact SVD factors.

To expose the underlying pole-residue structure, suppose temporarily that the reduced pencils \(\bar{E}_1^{-1}\bar{A}_1\) and \(\bar{A}_2\bar{E}_2^{-1}\) are diagonalizable. In suitable reduced coordinates, the projected matrices take the modal forms:
\begin{align*}
\bar{E}_1 &= I, & \bar{A}_1 &= \operatorname{diag}(\sigma_1,\dots,\sigma_r), & \bar{B}_1 &= \begin{bmatrix} b_{1,1}^\top \\ \vdots \\ b_{1,r}^\top \end{bmatrix}, & \bar{B}_2 &= \begin{bmatrix} b_{2,1}^\top \\ \vdots \\ b_{2,r}^\top \end{bmatrix}, \\
\bar{E}_2 &= I, & \bar{A}_2 &= \operatorname{diag}(\lambda_1,\dots,\lambda_r), & \bar{C}_1 &= \begin{bmatrix} c_{1,1} & \cdots & c_{1,r} \end{bmatrix}, & \bar{C}_2 &= \begin{bmatrix} c_{2,1} & \cdots & c_{2,r} \end{bmatrix},
\end{align*}
where \(b_{k,i}, c_{k,i} \in \mathbb{C}^m\). Under this representation, the solutions to \eqref{eq:V11}--\eqref{eq:W22} decouple into resolvent evaluations, yielding the refined inclusions:
\begin{align}
\operatorname{span}\left\{ (\lambda_i E_1 - A_1)^{-1} B_1 c_{1,i} \mid i=1,\dots,r \right\} &\subseteq \operatorname{Ran}(V_{11}), \label{eq:res_V11}\\
\operatorname{span}\left\{ (\lambda_i E_1 - A_1)^{-1} B_2 c_{2,i} \mid i=1,\dots,r \right\} &\subseteq \operatorname{Ran}(V_{12}), \label{eq:res_V12}\\
\operatorname{span}\left\{ (\sigma_i^* E_2^\top - A_2^\top)^{-1} C_1^{\top} b_{1,i}^* \mid i=1,\dots,r \right\} &\subseteq \operatorname{Ran}(W_{21}), \label{eq:res_W21}\\
\operatorname{span}\left\{ (\sigma_i^* E_2^\top - A_2^\top)^{-1} C_2^{\top} b_{2,i}^* \mid i=1,\dots,r \right\} &\subseteq \operatorname{Ran}(W_{22}). \label{eq:res_W22}
\end{align}

This pole-residue structure provides a clear control-theoretic interpretation of the solution's rank. The poles \(\{\sigma_i\}_{i=1}^r\) correspond to the controllable modes of the system \((E_1, A_1, [B_1 \ B_2])\), while \(\{\lambda_i\}_{i=1}^r\) correspond to the observable modes of \((E_2, A_2, [C_1^\top \ C_2^\top]^\top)\). Crucially, any strictly uncontrollable pole \(\{\sigma_i\}_{i=1}^{n_1}\) has a zero residue, which forces the corresponding column in the right singular factor \(W_y\) to vanish, thereby strictly reducing the column rank of \(Y\) below \(n_2\). Similarly, unobservable poles \(\{\lambda_i\}_{i=1}^{n_2}\) yield zero residues that force the column rank of \(V_x\) (and thus \(X\)) to be strictly less than \(n_1\). In practical large-scale applications where the system matrices are sparse and \(m \ll n_1, n_2\), the majority of the system poles are not strictly uncontrollable or unobservable, but rather \emph{nearly} so. Consequently, their associated residues are vanishingly small, leading to rapidly decaying singular values. This mechanism is the fundamental reason \(X\) and \(Y\) exhibit numerical low-rank behavior, forming the theoretical basis for applying model order reduction techniques to this problem.

Motivated by this structure, if we define the trial and test bases as the orthogonalizations of these enlarged ranges:
\begin{align*}
\hat{V}_x &= \operatorname{orth}\left(\begin{bmatrix} V_{11} & V_{12} \end{bmatrix}\right), & \hat{V}_y &= \operatorname{orth}\left(\begin{bmatrix} E_1V_{11} & A_1V_{12} \end{bmatrix}\right), \\
\hat{W}_x &= \operatorname{orth}\left(\begin{bmatrix} E_2^\top W_{21} & A_2^\top W_{22} \end{bmatrix}\right), & \hat{W}_y &= \operatorname{orth}\left(\begin{bmatrix} W_{21} & W_{22} \end{bmatrix}\right),
\end{align*}
and project the system as in \eqref{eq:projected}, the resulting reduced models satisfy the following tangential interpolation conditions:
\begin{align}
(\lambda_i E_1 - A_1)^{-1} \begin{bmatrix} B_1 & B_2 \end{bmatrix} \begin{bmatrix} c_{1,i} \\ c_{2,i} \end{bmatrix} &= \hat{V}_x (\lambda_i \hat{E}_1 - \hat{A}_1)^{-1} \begin{bmatrix} \hat{B}_1 & \hat{B}_2 \end{bmatrix} \begin{bmatrix} c_{1,i} \\ c_{2,i} \end{bmatrix}, \label{eq:interp_V}\\
(\sigma_i^* E_2^\top - A_2^\top)^{-1} \begin{bmatrix} C_1^{\top} & C_2^{\top} \end{bmatrix} \begin{bmatrix} b_{1,i}^* \\ b_{2,i}^* \end{bmatrix} &= \hat{W}_y (\sigma_i^* \hat{E}_2^\top - \hat{A}_2^\top)^{-1} \begin{bmatrix} \hat{C}_1^{\top} & \hat{C}_2^{\top} \end{bmatrix} \begin{bmatrix} b_{1,i}^* \\ b_{2,i}^* \end{bmatrix}. \label{eq:interp_W}
\end{align}
If \(\hat{X}\) and \(\hat{Y}\) uniquely solve this projected system, the lifted approximations \(\tilde{X} = \hat{V}_x\hat{X}\hat{W}_x^\top\) and \(\hat{Y} = \hat{V}_y\hat{Y}\hat{W}_y^\top\) recover \(X\) and \(Y\) exactly, because the ranges of the exact SVD factors are contained within the constructed bases.

While the exact poles \(\{\sigma_i, \lambda_i\}\) and residues are not known a priori, the preceding analysis demonstrates that constructing effective projection matrices reduces to a standard model reduction problem: finding bases \(\hat{V}_x\) and \(\hat{W}_y\) that approximate the dominant dynamics of the associated transfer functions, namely,
\begin{align}
(sE_1 - A_1)^{-1} \begin{bmatrix} B_1 & B_2 \end{bmatrix} &\approx \hat{V}_x (s\hat{E}_1 - \hat{A}_1)^{-1} \begin{bmatrix} \hat{B}_1 & \hat{B}_2 \end{bmatrix}, \label{eq:mor_V}\\
(sE_2^\top - A_2^\top)^{-1} \begin{bmatrix} C_1^{\top} & C_2^{\top} \end{bmatrix} &\approx \hat{W}_y (s\hat{E}_2^\top - \hat{A}_2^\top)^{-1} \begin{bmatrix} \hat{C}_1^{\top} & \hat{C}_2^{\top} \end{bmatrix}. \label{eq:mor_W}
\end{align}
This connection bridges the algebraic structure of the coupled Sylvester equations with rational interpolation techniques, guiding the algorithmic development in the subsequent sections.
\subsection{Extended Krylov-based Projection Method}
Extended Krylov subspace methods are effective at capturing dominant controllable and observable dynamics when computing low-rank solutions to large-scale matrix equations \cite{simoncini2016computational,simoncini2010extended}. From a rational interpolation perspective, these methods use fixed interpolation points at $s=0$ and $s=\infty$ to match coefficients in the Taylor expansions at the origin and the Laurent expansions at infinity of
\[
(sE_1-A_1)^{-1}\begin{bmatrix}B_1 & B_2\end{bmatrix}\quad \text{and}\quad
(sE_2^\top-A_2^\top)^{-1}
\begin{bmatrix}C_1^\top & C_2^\top\end{bmatrix}.
\]

The extended Krylov subspace method requires the additional assumption that \(A_1\) and \(A_2\) are invertible. The trial bases $\hat{V}_x$ and $\hat{W}_y$ satisfy
\begin{align}
\operatorname{Ran}(\hat{V}_x)
&\supseteq \operatorname{span}\left\{
(E_1^{-1}A_1)^{i-1}E_1^{-1}\tilde{B},\
(A_1^{-1}E_1)^{i-1}A_1^{-1}\tilde{B}
\mid i=1,\dots,k
\right\}, \nonumber\\
\operatorname{Ran}(\hat{W}_y)
&\supseteq \operatorname{span}\left\{
(E_2^{-\top}A_2^\top)^{i-1}E_2^{-\top}\tilde{C}^\top,\
(A_2^{-\top}E_2^\top)^{i-1}A_2^{-\top}\tilde{C}^\top
\mid i=1,\dots,k
\right\}, \label{eq:ext_krylov}
\end{align}
where $\tilde{B}=\begin{bmatrix}B_1 & B_2\end{bmatrix}$ and
$\tilde{C}^\top=\begin{bmatrix}C_1^\top & C_2^\top\end{bmatrix}$.
The test bases $\hat{V}_y$ and $\hat{W}_x$ are constructed as in Theorem~\ref{thm:projected}. The fixed interpolation points eliminate the need for shift selection, which can substantially affect the convergence of rational Krylov methods \cite{druskin2011analysis}. The resulting Extended Krylov-based Projection Algorithm for Coupled Generalized Sylvester Equations (EKPA-C-SYLV) is presented in Algorithm~\ref{alg1}.

\begin{algorithm}[!h]
\DontPrintSemicolon
\SetAlgoLined
\SetKwInOut{KwIn}{Inputs}
\SetKwInOut{KwOut}{Output}
\KwIn{Matrices of the Coupled Generalized Sylvester Equations \eqref{eq:coupled}: $(E_1,A_1,B_1,B_2,E_2,A_2,C_1,C_2)$; Tolerance: $\tau$.}
\KwOut{Low-rank approximation of $X$ and $Y$: $X\approx\tilde{X}=\hat{V}_x\hat{X}\hat{W}_x^\top$ and $Y\approx\tilde{Y}=\hat{V}_y\hat{Y}\hat{W}_y^\top$.}

Initialize: $\hat{V}_x = [\;]$, $\hat{V}_y = [\;]$, $\hat{W}_x = [\;]$, $\hat{W}_y = [\;]$, $\tilde{B}=[B_1,B_2]$, $\tilde{C}^\top=[C_1^\top,C_2^\top]$, $\text{rhs}_1=\tilde{B}$, $\text{rhs}_2=\tilde{B}$, $\text{rhs}_3=\tilde{C}^\top$, and $\text{rhs}_4=\tilde{C}^\top$.\;
\While{\Big($\frac{\|A_1\tilde{X} + \tilde{Y} A_2 +B_1C_1\|_F}{\|A_1\tilde{X}\|_F+\|\tilde{Y} A_2\|_F+\|B_1C_1\|_F} > \tau$\Big) $\lor$ \Big($\frac{\|E_1\tilde{X} + \tilde{Y} E_2 +B_2C_2\|_F}{\|E_1\tilde{X}\|_F+\|\tilde{Y} E_2\|_F+\|B_2C_2\|_F} > \tau$\Big)}{
    Solve for $v_{1}$, $v_{2}$, $w_{1}$, and $w_{2}$:\\
    $E_1v_{1} = \text{rhs}_{1}$, $A_1v_{2} = \text{rhs}_{2}$, $E_2^\top w_{1} = \text{rhs}_{3}$, and $A_2^\top w_{2} = \text{rhs}_{4}$.\;
    Orthogonally expand the trial bases: $\hat{V}_x \gets \mathrm{orth}\Big(\begin{bmatrix}\hat{V}_x & [v_{1}, v_{2}]\end{bmatrix}\Big)$ and $\hat{W}_y \gets \mathrm{orth}\Big(\begin{bmatrix}\hat{W}_y & [w_{1},w_{2}]\end{bmatrix}\Big)$.\;
    Orthogonally expand the test bases: $\hat{V}_y \gets \mathrm{orth}\Big(\begin{bmatrix}\hat{V}_y & E_1[v_{1},v_{2}]\end{bmatrix}\Big)$ and $\hat{W}_x \gets \mathrm{orth}\Big(\begin{bmatrix}\hat{W}_x & E_2^\top [w_{1}, w_{2}]\end{bmatrix}\Big)$.\;
    Project: $\hat{A}_1 \gets \hat{V}_y^{\top}A_1\hat{V}_x$, $\hat{E}_1 \gets \hat{V}_y^{\top}E_1\hat{V}_x$, $\hat{B}_1 \gets \hat{V}_y^{\top}B_1$, $\hat{B}_2 \gets \hat{V}_y^{\top}B_2$, $\hat{A}_2 \gets \hat{W}_y^{\top}A_2\hat{W}_x$, $\hat{E}_2 \gets \hat{W}_y^{\top}E_2\hat{W}_x$, $\hat{C}_1 \gets C_1\hat{W}_x$, and $\hat{C}_2 \gets C_2\hat{W}_x$.\;
    Solve the projected coupled generalized Sylvester equations \eqref{eq:projected} to compute $\hat{X}$ and $\hat{Y}$.\;
    Update: $\text{rhs}_1\gets A_1v_{1}$, $\text{rhs}_2\gets E_1v_{2}$, $\text{rhs}_3\gets A_2^\top w_{1}$, and $\text{rhs}_4\gets E_2^\top w_{2}$.
}
\caption{EKPA-C-SYLV}
\label{alg1}
\end{algorithm}

The main implementation details of EKPA-C-SYLV are summarized below:
\begin{enumerate}
  \item Stopping criterion: The iteration terminates when the normalized residuals of \eqref{eq:coupled} fall below the prescribed tolerance $\tau$. The tall-skinny basis matrices $\hat{V}_x$, $\hat{V}_y$, $\hat{W}_x$, and $\hat{W}_y$ allow the residual Frobenius norms to be evaluated efficiently.

  \item Trial bases: The efficiency of the extended Krylov method relies on exploiting the sparsity of the coefficient matrices. Since $E_1$, $A_1$, $E_2^\top$, and $A_2^\top$ remain fixed, their sparse factorizations are computed once before the iteration and reused in subsequent linear solves. Orthogonalization, denoted by $\mathrm{orth}(\cdot)$, is performed using modified Gram--Schmidt with reorthogonalization to maintain numerical orthogonality.

  \item Test bases: The test bases are constructed by orthonormalizing $E_1\hat{V}_x$ and $E_2^\top\hat{W}_y$. Under the standing assumptions, this construction ensures that the projected mass matrices are nonsingular. The field-of-values separation condition in Theorem~\ref{thm:projected} further guarantees that the projected operator is bijective.

  \item Projected solutions: At each iteration, the dense, reduced-order coupled generalized Sylvester equations \eqref{eq:projected} is solved using the direct algorithm of Shahzad et al.~\cite{shahzad2011efficient}.
\end{enumerate}
\subsection{Rational Krylov-based Projection Method}
Unlike extended Krylov methods, which use fixed interpolation points at $s=0$ and $s=\infty$, rational Krylov methods allow flexible shift selection. Their performance depends strongly on the choice of shifts, but effective shifts are generally difficult to determine \cite{druskin2011analysis}. The proposed Rational Krylov-based Projection Algorithm for Coupled Generalized Sylvester Equations (RKPA-C-SYLV) selects shifts automatically, without user intervention. The algorithm is presented in Algorithm~\ref{alg2}.

\begin{algorithm}[!h]
\DontPrintSemicolon
\SetAlgoLined
\SetKwInOut{KwIn}{Inputs}
\SetKwInOut{KwOut}{Output}
\KwIn{Matrices of the Coupled Generalized Sylvester Equations \eqref{eq:coupled}: $(E_1,A_1,B_1,B_2,E_2,A_2,C_1,C_2)$; Tolerance: $\tau$; Type: \textit{block} or \textit{tangential}.}
\KwOut{Low-rank approximation of $X$ and $Y$: $X\approx\tilde{X}=\hat{V}_x\hat{X}\hat{W}_x^\top$ and $Y\approx\tilde{Y}=\hat{V}_y\hat{Y}\hat{W}_y^\top$.}

Initialize: $\hat{V}_x = [\;]$, $\hat{V}_y = [\;]$, $\hat{W}_x = [\;]$, $\hat{W}_y = [\;]$, $B_{\perp}=[B_1,B_2]$, $C_{\perp}^\top=[C_1^\top,C_2^\top]$, $\alpha_1=\alpha_{\text{initial}}$, $r_1=\mathbf{1}_{1\times 2m}$, $\beta_1=\beta_{\text{initial}}$, $t_1=\mathbf{1}_{2m\times 1}$, $i=1$.\;
\While{\Big($\frac{\|A_1\tilde{X} + \tilde{Y} A_2 +B_1C_1\|_F}{\|A_1\tilde{X}\|_F+\|\tilde{Y} A_2\|_F+\|B_1C_1\|_F} > \tau$\Big) $\lor$ \Big($\frac{\|E_1\tilde{X} + \tilde{Y} E_2 +B_2C_2\|_F}{\|E_1\tilde{X}\|_F+\|\tilde{Y} E_2\|_F+\|B_2C_2\|_F} > \tau$\Big)}{
 \eIf{\textnormal{Type} = ``\textit{block}''}{
        $\mathcal{B} = B_{\perp}$ and $\mathcal{C} = C_{\perp}$.\;
    }{
        $\mathcal{B} = \begin{bmatrix}B_{\perp}(:,1:m) t_i(1:m) & B_{\perp}(:,m+1:2m) t_i(m+1:2m)\end{bmatrix}$ and
        $\mathcal{C}^\top = \begin{bmatrix}C_{\perp}(1:m,:)^\top r_i(1:m)^\top & C_{\perp}(m+1:2m,:)^\top r_i(m+1:2m)^\top\end{bmatrix}$.\;
    }
    Solve for $[v_{1}, v_{2}]$ and $[w_{1}, w_{2}]$:\\
    $\big(\beta_iE_1-A_1\big)[v_{1}, v_{2}] = \mathcal{B}$ and $\big(\alpha_i E_2^\top - A_2^\top\big)[w_{1}, w_{2}] = \mathcal{C}^\top$.\;
   Orthogonally expand the trial bases: $\hat{V}_x \gets \mathrm{orth}\Big(\begin{bmatrix}\hat{V}_x & [v_{1},v_{2}]\end{bmatrix}\Big)$ and $\hat{W}_y \gets \mathrm{orth}\Big(\begin{bmatrix}\hat{W}_y & [w_{1}, w_{2}]\end{bmatrix}\Big)$.\;
     Orthogonally expand the test bases: $\hat{V}_y \gets \mathrm{orth}\Big(\begin{bmatrix}\hat{V}_y & E_1[v_{1},v_{2}]\end{bmatrix}\Big)$ and $\hat{W}_x \gets \mathrm{orth}\Big(\begin{bmatrix}\hat{W}_x & E_2^\top [w_{1},w_{2}]\end{bmatrix}\Big)$.\;
    Project: $\hat{A}_1 \gets \hat{V}_y^{\top}A_1\hat{V}_x$, $\hat{E}_1 \gets \hat{V}_y^{\top}E_1\hat{V}_x$, $\hat{B}_1 \gets \hat{V}_y^{\top}B_1$, $\hat{B}_2 \gets \hat{V}_y^{\top}B_2$, $\hat{A}_2 \gets \hat{W}_y^{\top}A_2\hat{W}_x$, $\hat{E}_2 \gets \hat{W}_y^{\top}E_2\hat{W}_x$, $\hat{C}_1 \gets C_1\hat{W}_x$, and $\hat{C}_2 \gets C_2\hat{W}_x$.\;
    Solve the projected coupled generalized Sylvester equations \eqref{eq:projected} to compute $\hat{X}$ and $\hat{Y}$.\;
    Deflate: $B_{\perp}\gets \begin{bmatrix}B_1&B_2\end{bmatrix}-E_1\hat{V}_x\hat{E}_1^{-1}\begin{bmatrix}\hat{B}_1&\hat{B}_2\end{bmatrix}$ and $C_{\perp}\gets \begin{bmatrix}C_1\\C_2\end{bmatrix}-\begin{bmatrix}\hat{C}_1\\\hat{C}_2\end{bmatrix}\hat{E}_2^{-1}\hat{W}_y^{\top}E_2$.\;
    Compute the Ritz values: $(\hat{V}_x^\top E_1\hat{V}_x)^{-1}(\hat{V}_x^\top A_1 \hat{V}_x)=\hat{T}_1\begin{bsmallmatrix}\tilde{\alpha}_1&\cdots&0\\\vdots&\ddots&\vdots\\0&\cdots&\tilde{\alpha}_r\end{bsmallmatrix}\hat{T}_1^{-1}$ and $(\hat{W}_y^\top A_2\hat{W}_y)(\hat{W}_y^\top E_2\hat{W}_y)^{-1}=\hat{T}_2\begin{bsmallmatrix}\tilde{\beta}_1&\cdots&0\\\vdots&\ddots&\vdots\\0&\cdots&\tilde{\beta}_r\end{bsmallmatrix}\hat{T}_2^{-1}$.\;
    Compute the residues: $b_k=\hat{T}_1^{-1}(k,:)(\hat{V}_x^\top E_1\hat{V}_x)^{-1}(\hat{V}_x^\top B_{\perp})$ and $c_k=(C_{\perp}\hat{W}_y)(\hat{W}_y^\top E_2\hat{W}_y)^{-1}\hat{T}_2(:,k)$.\;
    Sort $\tilde{\alpha}_k$ and $\tilde{\beta}_k$ and their respective residues $b_k$ and $c_k$ in the descending order of their respective normalized residuals $\frac{\|b_k\|_2^2}{|\mathrm{Re}(\tilde{\alpha}_k)|}$ and $\frac{\|c_k\|_2^2}{|\mathrm{Re}(\tilde{\beta}_k)|}$.\;
    Generate the interpolation data: $\alpha_i=\tilde{\alpha}_1$, $r_i=(\frac{b_1}{\|b_1\|_2})^\top$, $\beta_i=\tilde{\beta}_1$, $t_i=\frac{c_1}{\|c_1\|_2}$, $i\gets i+1$.
}
\caption{RKPA-C-SYLV}
\label{alg2}
\end{algorithm}

The main implementation details of RKPA-C-SYLV are summarized below:
\begin{enumerate}
  \item Trial bases: The trial bases are chosen based on the structural properties established in Proposition~\ref{prop:svd_structure}.

  \item Shift generation: Since the controllable poles of $(E_1,A_1,[B_1,B_2])$ and $(E_2^\top,A_2^\top,[C_1^\top,C_2^\top])$ are unknown, RKPA-C-SYLV estimates them using the Ritz values of the reduced pencils $A_2E_2^{-1}$ and $E_1^{-1}A_1$, respectively. Dominant Ritz values are selected using the same measure employed in dominant pole estimation algorithms such as those in \cite{rommes2008modal,mengi2022large}.

  \item Deflation: The algorithm combines deflation and residue normalization to avoid redundant updates and retain useful expansion directions. Deflation removes the components of the right-hand sides already captured by the current subspaces, preventing repeated interpolation data. The residue vectors $b_1$ and $c_1$ are normalized to unit length so that directional updates can be extracted even when the selected Ritz values are weakly controllable or observable. Such values often occur near convergence, when the unnormalized residue norms may be negligible.

  \item Expansion type: The \textit{Type} input determines the number of expansion directions. In \textit{block} mode, the full right-hand side matrices $B_{\perp}$ and $C_{\perp}$ are used, each with $2m$ columns. In \textit{tangential} mode, each update has two columns. The update matrices $\mathcal{B}$ and $\mathcal{C}$ are formed from $B_{\perp}$ and $C_{\perp}$ using the normalized residue directions $r_i$ and $t_i$ associated with the dominant Ritz values. Tangential updates are particularly useful when $m$ is not small, since block updates can rapidly increase the projection dimensions and make the solution of \eqref{eq:projected} computationally expensive or infeasible.
\end{enumerate}
\section{Numerical Results}
This section evaluates the numerical performance of EKPA-C-SYLV and RKPA-C-SYLV. The MATLAB code for reproducing the results is publicly available \cite{mycodes}. All computations were performed in MATLAB R2025b on a Windows 11 laptop with 32 GB of RAM and an Intel Core Ultra 9 285H processor running at 2.9 GHz.

In all examples, $(E_1,A_1,[B_1,B_2])$ and $(E_2^\top,A_2^\top,[C_1^\top,C_2^\top])$ have $20$ and $10$ dominant poles, respectively. The remaining poles are nondominant, and the examples are constructed so that $X$ and $Y$ are numerically low-rank. The stopping tolerance is set to $\tau=10^{-8}$. The accuracy of both methods is assessed using the maximum normalized residual
\[
R=\max\{R_1,R_2\},
\]
where
\begin{align*}
R_1 =
\frac{
\|A_1\tilde{X}+\tilde{Y}A_2+B_1C_1\|_F
}{
\|A_1\tilde{X}\|_F+\|\tilde{Y}A_2\|_F+\|B_1C_1\|_F
}\quad \text{and}\quad
R_2 =
\frac{
\|E_1\tilde{X}+\tilde{Y}E_2+B_2C_2\|_F
}{
\|E_1\tilde{X}\|_F+\|\tilde{Y}E_2\|_F+\|B_2C_2\|_F
}.
\end{align*}
The initial shifts for RKPA-C-SYLV in all experiments are set to $(\alpha_{\text{initial}},\beta_{\text{initial}})=(0,0)$.
\subsection{Model Description}\label{sub}
The test models are constructed from $n_r$ distinct real poles $\eta_i$ and $n_c$ distinct complex-conjugate pole pairs, represented by $\zeta_i$ and $\overline{\zeta_i}$. For the complex poles, arbitrary blocks $b_i\in\mathbb{R}^{2\times m}$ and $c_i\in\mathbb{R}^{p\times 2}$ are chosen, and we define
\[
\alpha_i=
\begin{cases}
\displaystyle
\sqrt{\frac{10|\operatorname{Re}(\zeta_i)|}{\|c_i b_i\|_2}},
& i=1,\ldots,n_d,\\[2ex]
\displaystyle
\sqrt{\frac{0.1|\operatorname{Re}(\zeta_i)|}{\|c_i b_i\|_2}},
& i=n_d+1,\ldots,n_c.
\end{cases}
\]
The scaled input and output matrices are
\[
\mathbb{B}_1=
\begin{bmatrix}
\alpha_1 b_1\\
\vdots\\
\alpha_{n_c}b_{n_c}
\end{bmatrix}
\in\mathbb{R}^{2n_c\times m},
\qquad
\mathbb{C}_1=
\begin{bmatrix}
\alpha_1 c_1 & \cdots & \alpha_{n_c}c_{n_c}
\end{bmatrix}
\in\mathbb{R}^{p\times 2n_c}.
\]
This scaling assigns greater weight to the first $n_d$ complex-conjugate pairs, which constitute the $2n_d$ dominant poles.

For the real poles, we define
\[
\mathbb{B}_2=
0.001\left(\min_{i=1,\ldots,n_r}|\eta_i|\right)^2
\mathbf{1}_{n_r\times m},
\qquad
\mathbb{C}_2=
0.001\left(\min_{i=1,\ldots,n_r}|\eta_i|\right)^2
\mathbf{1}_{p\times n_r}.
\]
The corresponding state matrices are
\[
\mathbb{A}_1=
\operatorname{blkdiag}(J_1,\ldots,J_{n_c}),
\qquad
J_i=
\begin{bmatrix}
\operatorname{Re}(\zeta_i) & \operatorname{Im}(\zeta_i)\\
-\operatorname{Im}(\zeta_i) & \operatorname{Re}(\zeta_i)
\end{bmatrix},
\qquad
\mathbb{A}_2=\operatorname{diag}(\eta_1,\ldots,\eta_{n_r}).
\]

Let $n=n_r+2n_c$ and introduce the tridiagonal matrices
\[
\mathbb{P}=\operatorname{tridiag}\left(\frac{1}{2},1,\frac{1}{3}\right),
\qquad
\mathbb{Q}=\operatorname{tridiag}\left(\frac{1}{4},1,\frac{1}{6}\right),
\qquad
\mathbb{P},\mathbb{Q}\in\mathbb{R}^{n\times n}.
\]
A state-space realization of order $n$, with $m$ inputs and $p$ outputs, is then given by
\[
E=\mathbb{Q}\mathbb{P},
\qquad
A=\mathbb{Q}
\begin{bmatrix}
\mathbb{A}_1 & \mathbf{0}\\
\mathbf{0} & \mathbb{A}_2
\end{bmatrix}
\mathbb{P},
\qquad
B=\mathbb{Q}
\begin{bmatrix}
\mathbb{B}_1\\
\mathbb{B}_2
\end{bmatrix},
\qquad
C=
\begin{bmatrix}
\mathbb{C}_1 & \mathbb{C}_2
\end{bmatrix}
\mathbb{P}.
\]
The matrices $E$ and $A$ are sparse by construction. The real poles are chosen to be nearly uncontrollable and unobservable, while the first $n_d$ complex-conjugate pairs are the most controllable and observable.

The model order, pole locations, and numbers of inputs and outputs are user-specified, providing benchmark models with prescribed properties for testing model order reduction algorithms and low-rank matrix-equation solvers. The poles are known by construction, even when an eigenvalue decomposition of $E^{-1}A$ is computationally infeasible. The MATLAB function \texttt{gen\_model}, available in \cite{mycodes}, implements this construction.
\subsection{Example 1}
This example considers a coupled generalized Sylvester system of moderate size for which the direct algorithm in \cite{shahzad2011efficient} remains feasible. The direct solution was used as a reference to assess the accuracy of the low-rank approximations computed by EKPA-C-SYLV and RKPA-C-SYLV.

The matrices $E_1,A_1\in\mathbb{R}^{1000\times 1000}$,
$B_1,B_2\in\mathbb{R}^{1000\times 1}$,
$E_2,A_2\in\mathbb{R}^{500\times 500}$, and
$C_1,C_2\in\mathbb{R}^{1\times 500}$ were generated using the procedure in Subsection~\ref{sub}. Since $m=1$, the \textit{block} and \textit{tangential} modes are equivalent. Figure~\ref{fig1} shows the decay of the normalized residual $R$. Both methods converged rapidly.
\begin{figure}[!h]
  \centering
  \includegraphics[width=12cm]{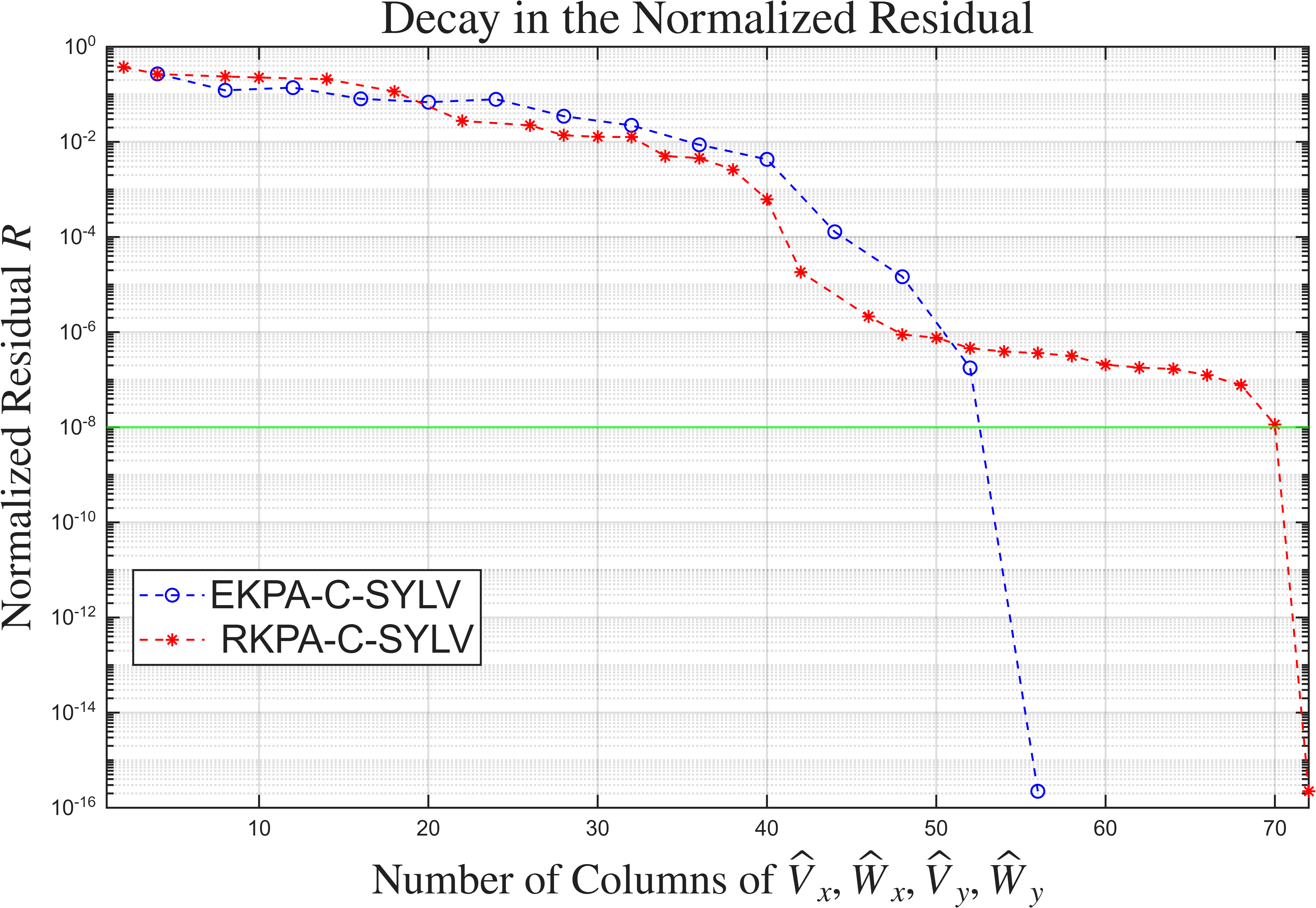}
  \caption{Decay of the normalized residual $R$.}
  \label{fig1}
\end{figure}

Let $(X_{\text{exact}},Y_{\text{exact}})$ denote the reference solution, and let $(X_{\text{ek}},Y_{\text{ek}})$ and $(X_{\text{rk}},Y_{\text{rk}})$ denote the low-rank approximations computed by EKPA-C-SYLV and RKPA-C-SYLV, respectively. The relative differences were
\begin{align*}
\frac{\|X_{\text{exact}}-X_{\text{ek}}\|_2}{\|X_{\text{exact}}\|_2}
&=1.8089\times 10^{-8},
&
\frac{\|Y_{\text{exact}}-Y_{\text{ek}}\|_2}{\|Y_{\text{exact}}\|_2}
&=5.5052\times 10^{-8},\\
\frac{\|X_{\text{exact}}-X_{\text{rk}}\|_2}{\|X_{\text{exact}}\|_2}
&=2.0141\times 10^{-8},
&
\frac{\|Y_{\text{exact}}-Y_{\text{rk}}\|_2}{\|Y_{\text{exact}}\|_2}
&=3.6828\times 10^{-8},
\end{align*}
and the corresponding absolute differences were
\begin{align*}
\|X_{\text{exact}}-X_{\text{ek}}\|_2
&=1.5651\times10^{-5},
&
\|Y_{\text{exact}}-Y_{\text{ek}}\|_2
&=1.5088\times10^{-5},\\
\|X_{\text{exact}}-X_{\text{rk}}\|_2
&=1.7425\times10^{-5},
&
\|Y_{\text{exact}}-Y_{\text{rk}}\|_2
&=1.0094\times10^{-5}.
\end{align*}
Both methods produced low-rank approximations with relative differences on the order of $10^{-8}$ from the reference solution.
\subsection{Example 2}
This example considers a large-scale coupled generalized Sylvester system for which the direct algorithm in \cite{shahzad2011efficient} is computationally infeasible. The matrices
$E_1,A_1,E_2,A_2\in\mathbb{R}^{10^5\times 10^5}$,
$B_1,B_2\in\mathbb{R}^{10^5\times 2}$, and
$C_1,C_2\in\mathbb{R}^{2\times 10^5}$ were generated using the procedure in Subsection~\ref{sub}.

Figure~\ref{fig2} shows the decay of the normalized residual $R$. Tangential updates offered no advantage in this example, as they slowed convergence. The adaptive shifts in RKPA-C-SYLV, however, yielded lower-rank approximations than EKPA-C-SYLV.
\begin{figure}[!h]
  \centering
  \includegraphics[width=12cm]{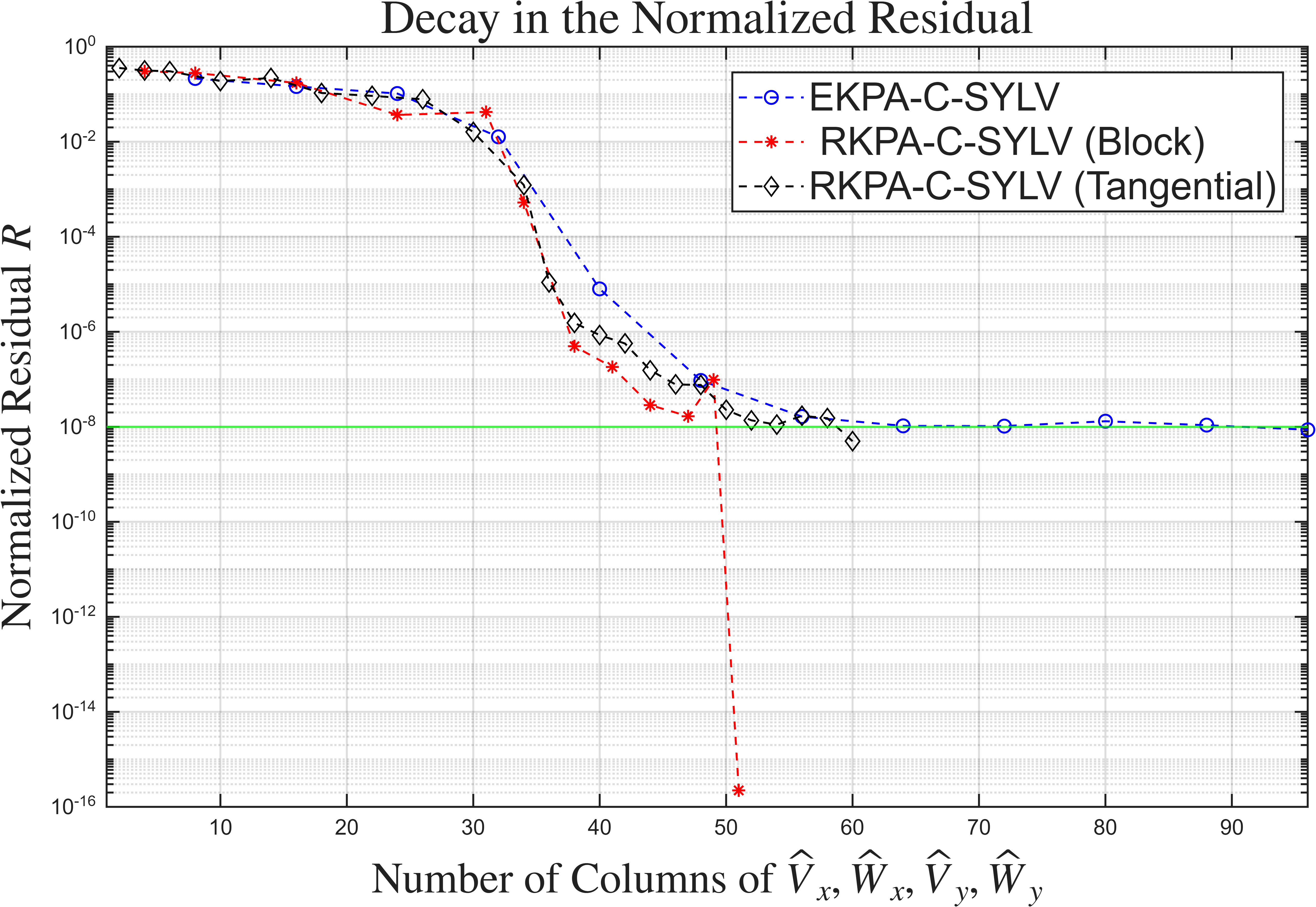}
  \caption{Decay of the normalized residual $R$.}
  \label{fig2}
\end{figure}

Table~\ref{tab1} compares the elapsed times. The block version of RKPA-C-SYLV was the fastest, and all three methods converged in less than $15$ seconds. Although EKPA-C-SYLV produced less compact approximations, it remained competitive in runtime because its sparse LU factorizations were computed only once and reused throughout the iteration. In contrast, RKPA-C-SYLV required new factorizations at each iteration.
\begin{table}[h]
  \centering
  \caption{Elapsed times for Example 2.}
  \begin{tabular}{lc}
    \hline
    Method & Elapsed time (s) \\
    \hline
    EKPA-C-SYLV & 9.4408 \\
    RKPA-C-SYLV (Block) & 7.2010 \\
    RKPA-C-SYLV (Tangential) & 14.2224 \\
    \hline
  \end{tabular}
  \label{tab1}
\end{table}
\subsection{Example 3}
This example considers a large-scale coupled generalized Sylvester system for which the direct algorithm in \cite{shahzad2011efficient} is computationally infeasible. The matrices
$E_1,A_1,E_2,A_2\in\mathbb{R}^{10^6\times 10^6}$,
$B_1,B_2\in\mathbb{R}^{10^6\times 5}$, and
$C_1,C_2\in\mathbb{R}^{5\times 10^6}$ were generated using the procedure in Subsection~\ref{sub}.

Figure~\ref{fig3} shows the decay of the normalized residual $R$. Tangential updates were particularly effective in this example: RKPA-C-SYLV converged and produced the most compact approximation. In this mode, at most two columns were added to each basis per iteration. In block mode, RKPA-C-SYLV could add up to $10$ columns per iteration but failed to converge and was terminated prematurely because of linear dependence in the bases. EKPA-C-SYLV could add up to $20$ columns per iteration and converged rapidly despite the faster growth of its bases.
\begin{figure}[!h]
  \centering
  \includegraphics[width=12cm]{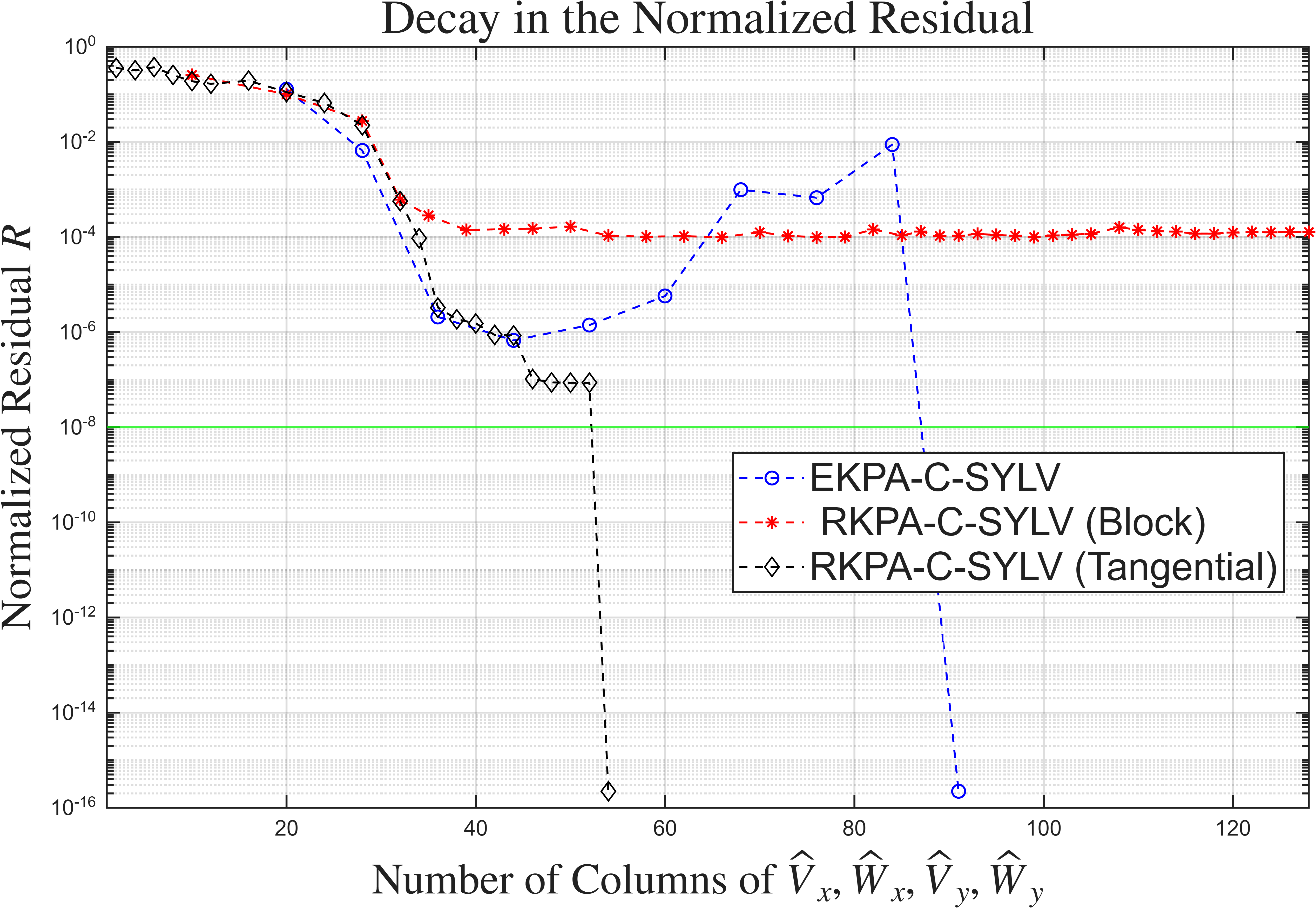}
  \caption{Decay of the normalized residual $R$.}
  \label{fig3}
\end{figure}

Table~\ref{tab2} compares the elapsed times. EKPA-C-SYLV was the fastest despite producing less compact approximations, as its sparse LU factorizations were computed once and reused throughout the iteration. In contrast, RKPA-C-SYLV required new factorizations at each iteration. The block version of RKPA-C-SYLV took the longest, struggling to refine the shifts after the bases reached $50$ columns. Subsequent updates failed to add useful information, eventually leading to linear dependence and termination.
\begin{table}[h]
  \centering
  \caption{Elapsed times for Example 3. The block version of RKPA-C-SYLV terminated without convergence.}
  \begin{tabular}{lc}
    \hline
    Method & Elapsed time (s) \\
    \hline
    EKPA-C-SYLV & 92.3005 \\
    RKPA-C-SYLV (Block) & 650.1552 \\
    RKPA-C-SYLV (Tangential) & 131.9391 \\
    \hline
  \end{tabular}
  \label{tab2}
\end{table}
\section{Conclusion}
This paper presented projection methods for computing low-rank solutions to large-scale coupled generalized Sylvester equations. The dominant subspaces of the exact solutions were shown to be contained in subspaces spanned by specific resolvent directions, providing the theoretical basis for the proposed methods. Within a Petrov--Galerkin framework, unique solvability of the projected equations was established under a field-of-values separation condition. This framework led to two algorithms: EKPA-C-SYLV and RKPA-C-SYLV. The former uses fixed shifts at $s=0$ and $s=\infty$, whereas the latter adaptively selects shifts from the Ritz values of the reduced pencils. Numerical experiments on large-scale problems demonstrated that the proposed algorithms computed accurate low-rank solutions efficiently.
\section*{Appendix}
\begin{proof}
Right-multiply the two equations in \eqref{eq:coupled} by \(W_x\). Using
\[
X = V_x\Sigma_xW_x^{\top},
\qquad
Y = V_y\Sigma_yW_y^{\top},
\qquad
W_x^{\top}W_x = I_r,
\]
and the definitions of the projected matrices, we obtain
\begin{align}
A_1V_x\Sigma_x + V_y\Sigma_y\bar{A}_2
&=
-B_1\bar{C}_1,
\label{eq:proj-left1}
\\
E_1V_x\Sigma_x + V_y\Sigma_y\bar{E}_2
&=
-B_2\bar{C}_2.
\label{eq:proj-left2}
\end{align}
Right-multiplying \eqref{eq:proj-left1} and \eqref{eq:proj-left2} by \(\bar{E}_2^{-1}\) gives
\begin{align}
A_1V_x\Sigma_x\bar{E}_2^{-1}
+
V_y\Sigma_y\bar{A}_2\bar{E}_2^{-1}
&=
-B_1\bar{C}_1\bar{E}_2^{-1},
\label{eq:proj-left1-inv}
\\
E_1V_x\Sigma_x\bar{E}_2^{-1}
+
V_y\Sigma_y
&=
-B_2\bar{C}_2\bar{E}_2^{-1}.
\label{eq:proj-left2-inv}
\end{align}
From \eqref{eq:proj-left2-inv},
\[
V_y\Sigma_y
=
-B_2\bar{C}_2\bar{E}_2^{-1}
-
E_1V_x\Sigma_x\bar{E}_2^{-1}.
\]
Substituting this into \eqref{eq:proj-left1-inv} yields
\begin{equation}\label{eq:V-main}
A_1V_x\Sigma_x\bar{E}_2^{-1}
-
E_1V_x\Sigma_x\bar{E}_2^{-1}
\left(\bar{A}_2\bar{E}_2^{-1}\right)
+
B_1\left(\bar{C}_1\bar{E}_2^{-1}\right)
-
B_2\left(\bar{C}_2\bar{E}_2^{-1}\right)
\left(\bar{A}_2\bar{E}_2^{-1}\right)
=
0.
\end{equation}

Now multiply \eqref{eq:V12} on the right by \(\bar{A}_2\bar{E}_2^{-1}\). This gives
\[
A_1V_{12}\bar{A}_2\bar{E}_2^{-1}
-
E_1V_{12}
\left(\bar{A}_2\bar{E}_2^{-1}\right)
\left(\bar{A}_2\bar{E}_2^{-1}\right)
-
B_2\left(\bar{C}_2\bar{E}_2^{-1}\right)
\left(\bar{A}_2\bar{E}_2^{-1}\right)
=
0.
\]
Adding this to \eqref{eq:V11} yields
\begin{align*}
& A_1\left(V_{11}+V_{12}\bar{A}_2\bar{E}_2^{-1}\right)
-
E_1\left(V_{11}+V_{12}\bar{A}_2\bar{E}_2^{-1}\right)
\left(\bar{A}_2\bar{E}_2^{-1}\right)
\\
&\qquad
+
B_1\left(\bar{C}_1\bar{E}_2^{-1}\right)
-
B_2\left(\bar{C}_2\bar{E}_2^{-1}\right)
\left(\bar{A}_2\bar{E}_2^{-1}\right)
=
0.
\end{align*}
Equivalently, since
\[
V_{11}+V_{12}\bar{A}_2\bar{E}_2^{-1}
=
\left(V_{11}\bar{E}_2+V_{12}\bar{A}_2\right)\bar{E}_2^{-1},
\]
the matrix
\[
\left(V_{11}\bar{E}_2+V_{12}\bar{A}_2\right)\bar{E}_2^{-1}
\]
satisfies the same Sylvester equation \eqref{eq:V-main} as
\[
V_x\Sigma_x\bar{E}_2^{-1}.
\]

The equations \eqref{eq:V-main} and \eqref{eq:V11} have the same left-hand Sylvester operator
\[
Z
\mapsto
A_1Z-E_1Z\left(\bar{A}_2\bar{E}_2^{-1}\right).
\]
Because the solution \(V_{11}\) of \eqref{eq:V11} is unique, the corresponding homogeneous equation has only the zero solution; otherwise adding a nonzero homogeneous solution to \(V_{11}\) would give a second solution of \eqref{eq:V11}. Hence \eqref{eq:V-main} has at most one solution. Therefore
\[
V_x\Sigma_x\bar{E}_2^{-1}
=
\left(V_{11}\bar{E}_2+V_{12}\bar{A}_2\right)\bar{E}_2^{-1}.
\]
Multiplying on the right by \(\bar{E}_2\) gives
\[
V_x\Sigma_x
=
V_{11}\bar{E}_2+V_{12}\bar{A}_2.
\]
Multiplying on the right by \(\Sigma_x^{-1}\) proves
\[
V_x
=
\left(V_{11}\bar{E}_2+V_{12}\bar{A}_2\right)\Sigma_x^{-1},
\]
which is \eqref{eq:Vx}.

Next, using \eqref{eq:proj-left2} again,
\[
V_y\Sigma_y\bar{E}_2
=
-B_2\bar{C}_2
-
E_1V_x\Sigma_x.
\]
Substitute the already proved identity
\[
V_x\Sigma_x
=
V_{11}\bar{E}_2+V_{12}\bar{A}_2
\]
into this expression:
\[
V_y\Sigma_y\bar{E}_2
=
-B_2\bar{C}_2
-
E_1V_{11}\bar{E}_2
-
E_1V_{12}\bar{A}_2.
\]
Multiplying \eqref{eq:V12} on the right by \(\bar{E}_2\) gives
\[
A_1V_{12}\bar{E}_2
-
E_1V_{12}\bar{A}_2
-
B_2\bar{C}_2
=
0.
\]
Hence
\[
-B_2\bar{C}_2
-
E_1V_{12}\bar{A}_2
=
-A_1V_{12}\bar{E}_2.
\]
Therefore
\[
V_y\Sigma_y\bar{E}_2
=
-\left(E_1V_{11}+A_1V_{12}\right)\bar{E}_2.
\]
Since \(\bar{E}_2\) is invertible,
\[
V_y\Sigma_y
=
-\left(E_1V_{11}+A_1V_{12}\right).
\]
Multiplying on the right by \(\Sigma_y^{-1}\) proves
\[
V_y
=
-\left(E_1V_{11}+A_1V_{12}\right)\Sigma_y^{-1},
\]
which is \eqref{eq:Vy}.

Now transpose the two equations in \eqref{eq:coupled}:
\[
X^{\top}A_1^{\top}+A_2^{\top}Y^{\top}
=
-C_1^{\top}B_1^{\top},
\qquad
X^{\top}E_1^{\top}+E_2^{\top}Y^{\top}
=
-C_2^{\top}B_2^{\top}.
\]
Using
\[
X^{\top}=W_x\Sigma_xV_x^{\top},
\qquad
Y^{\top}=W_y\Sigma_yV_y^{\top},
\]
and right-multiplying by \(V_y\), while using \(V_y^{\top}V_y=I_r\), we obtain
\begin{align}
W_x\Sigma_x\bar{A}_1^{\top}
+
A_2^{\top}W_y\Sigma_y
&=
-C_1^{\top}\bar{B}_1^{\top},
\label{eq:proj-right1}
\\
W_x\Sigma_x\bar{E}_1^{\top}
+
E_2^{\top}W_y\Sigma_y
&=
-C_2^{\top}\bar{B}_2^{\top}.
\label{eq:proj-right2}
\end{align}
Right-multiplying \eqref{eq:proj-right1} and \eqref{eq:proj-right2} by \(\bar{E}_1^{-\top}\), and using
\[
\bar{A}_1^{\top}\bar{E}_1^{-\top}
=
\left(\bar{E}_1^{-1}\bar{A}_1\right)^{\top},
\qquad
\bar{B}_k^{\top}\bar{E}_1^{-\top}
=
\left(\bar{E}_1^{-1}\bar{B}_k\right)^{\top},
\quad k=1,2,
\]
gives
\begin{align}
W_x\Sigma_x
\left(\bar{E}_1^{-1}\bar{A}_1\right)^{\top}
+
A_2^{\top}W_y\Sigma_y\bar{E}_1^{-\top}
&=
-C_1^{\top}
\left(\bar{E}_1^{-1}\bar{B}_1\right)^{\top},
\label{eq:proj-right1-inv}
\\
W_x\Sigma_x
+
E_2^{\top}W_y\Sigma_y\bar{E}_1^{-\top}
&=
-C_2^{\top}
\left(\bar{E}_1^{-1}\bar{B}_2\right)^{\top}.
\label{eq:proj-right2-inv}
\end{align}
From \eqref{eq:proj-right2-inv},
\[
W_x\Sigma_x
=
-C_2^{\top}
\left(\bar{E}_1^{-1}\bar{B}_2\right)^{\top}
-
E_2^{\top}W_y\Sigma_y\bar{E}_1^{-\top}.
\]
Substituting this into \eqref{eq:proj-right1-inv} yields
\begin{equation}\label{eq:W-main}
A_2^{\top}W_y\Sigma_y\bar{E}_1^{-\top}
-
E_2^{\top}W_y\Sigma_y\bar{E}_1^{-\top}
\left(\bar{E}_1^{-1}\bar{A}_1\right)^{\top}
+
C_1^{\top}
\left(\bar{E}_1^{-1}\bar{B}_1\right)^{\top}
-
C_2^{\top}
\left(\bar{E}_1^{-1}\bar{B}_2\right)^{\top}
\left(\bar{E}_1^{-1}\bar{A}_1\right)^{\top}
=
0.
\end{equation}

Now multiply the Sylvester equation for \(W_{22}\) on the right by
\(\left(\bar{E}_1^{-1}\bar{A}_1\right)^{\top}\). This gives
\[
A_2^{\top}W_{22}
\left(\bar{E}_1^{-1}\bar{A}_1\right)^{\top}
-
E_2^{\top}W_{22}
\left(\bar{E}_1^{-1}\bar{A}_1\right)^{\top}
\left(\bar{E}_1^{-1}\bar{A}_1\right)^{\top}
-
C_2^{\top}
\left(\bar{E}_1^{-1}\bar{B}_2\right)^{\top}
\left(\bar{E}_1^{-1}\bar{A}_1\right)^{\top}
=
0.
\]
Adding this to \eqref{eq:W21} yields
\begin{align*}
& A_2^{\top}
\left(
W_{21}
+
W_{22}\left(\bar{E}_1^{-1}\bar{A}_1\right)^{\top}
\right)
-
E_2^{\top}
\left(
W_{21}
+
W_{22}\left(\bar{E}_1^{-1}\bar{A}_1\right)^{\top}
\right)
\left(\bar{E}_1^{-1}\bar{A}_1\right)^{\top}
\\
&\qquad
+
C_1^{\top}
\left(\bar{E}_1^{-1}\bar{B}_1\right)^{\top}
-
C_2^{\top}
\left(\bar{E}_1^{-1}\bar{B}_2\right)^{\top}
\left(\bar{E}_1^{-1}\bar{A}_1\right)^{\top}
=
0.
\end{align*}
Thus
\[
W_{21}
+
W_{22}\left(\bar{E}_1^{-1}\bar{A}_1\right)^{\top}
\]
satisfies the same Sylvester equation \eqref{eq:W-main} as
\[
W_y\Sigma_y\bar{E}_1^{-\top}.
\]

The equations \eqref{eq:W-main} and \eqref{eq:W21} have the same left-hand Sylvester operator
\[
Z
\mapsto
A_2^{\top}Z
-
E_2^{\top}Z
\left(\bar{E}_1^{-1}\bar{A}_1\right)^{\top}.
\]
Because the solution \(W_{21}\) of \eqref{eq:W21} is unique, the corresponding homogeneous equation has only the zero solution; otherwise adding a nonzero homogeneous solution to \(W_{21}\) would give a second solution of \eqref{eq:W21}. Hence \eqref{eq:W-main} has at most one solution. Therefore
\[
W_y\Sigma_y\bar{E}_1^{-\top}
=
W_{21}
+
W_{22}\left(\bar{E}_1^{-1}\bar{A}_1\right)^{\top}.
\]
Multiplying on the right by \(\bar{E}_1^{\top}\) gives
\[
W_y\Sigma_y
=
W_{21}\bar{E}_1^{\top}
+
W_{22}
\left(\bar{E}_1^{-1}\bar{A}_1\right)^{\top}
\bar{E}_1^{\top}.
\]
Since
\[
\left(\bar{E}_1^{-1}\bar{A}_1\right)^{\top}\bar{E}_1^{\top}
=
\bar{A}_1^{\top},
\]
we obtain
\[
W_y\Sigma_y
=
W_{21}\bar{E}_1^{\top}
+
W_{22}\bar{A}_1^{\top}.
\]
Multiplying on the right by \(\Sigma_y^{-1}\) proves
\[
W_y
=
\left(W_{21}\bar{E}_1^{\top}+W_{22}\bar{A}_1^{\top}\right)\Sigma_y^{-1},
\]
which is \eqref{eq:Wy}.

Finally, use \eqref{eq:proj-right2-inv} again:
\[
W_x\Sigma_x
=
-C_2^{\top}
\left(\bar{E}_1^{-1}\bar{B}_2\right)^{\top}
-
E_2^{\top}W_y\Sigma_y\bar{E}_1^{-\top}.
\]
Substitute the already proved identity
\[
W_y\Sigma_y\bar{E}_1^{-\top}
=
W_{21}
+
W_{22}\left(\bar{E}_1^{-1}\bar{A}_1\right)^{\top}
\]
into this expression:
\[
W_x\Sigma_x
=
-C_2^{\top}
\left(\bar{E}_1^{-1}\bar{B}_2\right)^{\top}
-
E_2^{\top}W_{21}
-
E_2^{\top}W_{22}
\left(\bar{E}_1^{-1}\bar{A}_1\right)^{\top}.
\]
From the Sylvester equation for \(W_{22}\),
\[
A_2^{\top}W_{22}
-
E_2^{\top}W_{22}
\left(\bar{E}_1^{-1}\bar{A}_1\right)^{\top}
-
C_2^{\top}
\left(\bar{E}_1^{-1}\bar{B}_2\right)^{\top}
=
0,
\]
and hence
\[
-C_2^{\top}
\left(\bar{E}_1^{-1}\bar{B}_2\right)^{\top}
-
E_2^{\top}W_{22}
\left(\bar{E}_1^{-1}\bar{A}_1\right)^{\top}
=
-A_2^{\top}W_{22}.
\]
Therefore
\[
W_x\Sigma_x
=
-\left(
E_2^{\top}W_{21}
+
A_2^{\top}W_{22}
\right).
\]
Multiplying on the right by \(\Sigma_x^{-1}\) proves
\[
W_x
=
-\left(
E_2^{\top}W_{21}
+
A_2^{\top}W_{22}
\right)\Sigma_x^{-1},
\]
which is \eqref{eq:Wx}.

Thus all four relations \eqref{eq:Vx}--\eqref{eq:Wy} hold.
\end{proof}

\end{document}